\documentclass[11pt]{amsart}
\usepackage{graphicx}
\usepackage{hyperref}
\usepackage{palatino}
\usepackage[margin=2cm]{geometry}
\usepackage[english]{babel}

\newtheorem{theorem}{Theorem}[section]
\newtheorem{lemma}[theorem]{Lemma}

\theoremstyle{definition}

\theoremstyle{remark}

\numberwithin{equation}{section}
\allowdisplaybreaks
%%%%%%%%%%%%%%%%%%%%%%%%%%%%%%%%%%%%%%%%%%%%%%%%%%%%%%%%%%%%%%%%%%%%%%%%%%%%%%%%%%%%%%%%%%%%%%%%%%%%%%%%%%%%%%%%%%%%%%%%%%%%%%%%%%%%%%%%%%%%%%%%%%%%%%%%%%%%%%%%%%%%%%%%%%%%%%%%%%%%%%%%%%%%%%%%%%%%%%%%%%%%%%%%%%%%%%%%%%%%%%%%%%%%%%%%%%%%%%%%%%%%%%%%%%%%

\begin{document}

\title[Convex ordering of P\'{o}lya random variables and error monotonicity of Bernstein-Stancu operators]{Convex ordering of P\'{o}lya random variables and monotonicity of the error estimate of Bernstein-Stancu operators}
\author[D. Mele\c{s}teu]{Diana Mele\c{s}teu}
\address{Faculty of Mathematics and Computer Science, Transilvania University of Bra\c{s}ov, Str. Iuliu Maniu 50, Bra\c{s}ov -- 500091, Romania.}
\email{diana.melesteu@unitbv.ro}

\author[M. N. Pascu]{Mihai N. Pascu}
\address{Faculty of Mathematics and Computer Science, Transilvania University of Bra\c{s}ov, Str. Iuliu Maniu 50, Bra\c{s}ov -- 500091, Romania.}
\email{mihai.pascu@unitbv.ro}

\author[N. R. Pascu]{Nicolae R. Pascu}
\address{Department of Mathematics, Southern Polytechnic State University, 1100 South Marietta Parkway, Marietta, GA 30060-2896, U.S.A.}
\email{npascu@spsu.edu}

\subjclass[2020]{Primary 60E15, 41A36, 41A10, 47A63, 47A58}
\keywords{Convex ordering, P\'olya urn model, error estimate of Bernstein-Stancu operators, Bernstein operator, approximation theory}

\begin{abstract}
In the present paper we show that in P\'{o}lya's urn model, for an arbitrarily fixed initial distribution of the urn, the corresponding random variables satisfy a convex ordering with respect to the replacement parameter. As an application, we show that in the class of convex functions, the absolute value of the error of Bernstein-Stancu operators is a non-decreasing (strictly increasing under an additional hypothesis) function of the corresponding parameter.

The proof relies on two results of independent interest: an interlacing lemma of three sets and the monotonicity of the (partial) first moment of P\'{o}lya random variables with respect to the replacement parameter.
\end{abstract}

\maketitle

%% Start line numbering here if you want % \linenumbers

%% main text
%%%%%%%%%%%%%%%%%%%%%%%%%%%%%%%%%%%%%%%%%%%%%%%%%%%%%%%%%%%%%%%%%%%%%%%%%%%%%%%%%%%%%%

\section{Introduction}

More than $100$ years ago, in a beautiful and short paper Serge Bernstein (%
\cite{Berstein 1912}) gave a simple, constructive proof of Weierstrass's
theorem on uniform approximation of con\-ti\-nu\-ous functions by polynomials,
known nowadays as Bernstein polynomials.

About $50$ years later, D. D. Stancu noticed that the binomial distribution
used by Bernstein is a particular case of the P\'{o}lya's urn distribution
(the case of the replacement parameter being equal to zero), and he
introduced (\cite{Stancu'68Japan}, \cite{Stancu'69}) a more general class of
polynomials/operators, known in the literature as the P\'{o}lya-Stancu
operators (the operator $P_{n}^{c}$ defined by (\ref{Bernstein-Stancu
operator})).

Aside from a passing remark that for a particular choice (negative real
number) of the replacement parameter $c$ one obtains the Lagrange
interpolation polynomial (which cannot be used for uniform approximation),
in his work Stancu considered only non-negative values $c\geq 0$ of the
replacement parameter in P\'{o}lya's urn model, this choice being adopted by subsequent researchers in the field.

Recently (in \cite{PaPaTr1}, see also \cite{PaPaTr2} and \cite{TrPa}), the
last two authors introduced the operator $R_{n}$ (given by (\ref{Pascu operator})),
corresponding to a negative choice (pointwise minimal) of the replacement
parameter of P\'{o}lya's urn distribution, and showed that this leads to
better approximation results. To be precise, we showed that the upper bounds
of the error estimates (in terms of the first/second order modulus of
continuity of the function, etc) are smaller than the corresponding estimates for the
Bernstein operator, and we also provided numerical evidence (for various
choices of the function, smooth, continuous, even discontinuous - see \cite%
{PaPaTr1}) which indicated that among all Bernstein-Stancu type operators,
the newly introduced operator $R_{n}$ gives the best approximation.

A criticism received while publishing these results was that even
though the upper bounds for the error of the new operator are smaller than
the corresponding ones for the Bernstein operator, this is not a proof that
the new operator is a better approximation operator.

In the present paper, we fill this gap (at least partially), by showing that
for a sufficiently large class of functions (convex functions) the absolute
value of the error of approximation of Bernstein-Stancu type operators $%
P_{n}^{c}$ is (pointwise) a monotonically increasing function of the
replacement parameter $c$. This shows that in the case of operators
considered by Stancu (non-negative replacement parameter), the Bernstein
operator $B_{n}$ gives the best approximation of convex functions, and that
the choice of the operator $R_{n}$ considered by the last two authors (minimal
admissible choice of the replacement parameter) further improves this approximation, giving the best
approximation of convex functions.

The proof of the main result (Theorem \ref{Theorem on monotonicty of error})
relies on a result of independent interest (Theorem \ref{Theorem on Polya
convex ordering}), which shows that the P\'{o}lya random variables satisfy a
convex ordering with respect to the replacement parameter. In turn, the
proof of this result relies on two other results with interest on their own:
a result concerning the ordering (interlacing) of three sets (Lemma \ref%
{interlacing lemma}) and the monotonicity of the partial centered moment of
the P\'{o}lya distribution (Lemma \ref{monotonicity of Polya first centered
moment}).

The structure of the paper is the following. In Section \ref{Section
Preliminaries} we introduce the notation and the main results needed in the
sequel.

In Section \ref{Section Main results} we prove the convex ordering of P\'{o}%
lya random variables, and, as an application, in Section \ref{Section
Applications} we prove that in the case of convex functions, the absolute
value of the error of approximation of the Bernstein-Stancu operators is a
non-decreasing function of the replacement parameter (strictly increasing under an
additional hypothesis).

\section{Preliminary results\label{Section Preliminaries}}

Recall that for given integer parameters $a,b\ge0$, $c$, and $n\geq 1$
satisfying the compati\-bi\-lity condition
\begin{equation}
a+\left( n-1\right) c\geq 0\text{\qquad and\qquad }b+\left( n-1\right) c\geq
0\text{,}  \label{hypotheses on a,b,c}
\end{equation}%
P\'{o}lya's urn model gives the number of white balls ("successes")
extracted in $n$ trials from an urn containing initially $a$ white balls and
$b$ black balls, where after each extraction, the extracted ball is replaced
in the urn together with $c$ balls of the same color. Denoting by $%
X_{n}^{a,b,c}$ the random variable representing the number of successes in
this experiment (which will be referred to as a P\'{o}lya random variable
with parameters $a,b,c$ and $n$), we have
\begin{equation}  \label{Polya probabilities}
p_{n,k}^{a,b,c}=P\left( X_{n}^{a,b,c}=k\right) =C_{n}^{k}\frac{a^{\left(
k,c\right) }b^{\left( n-k,c\right) }}{(a+b)^{\left( n,c\right) }}, \qquad
k\in \{0,1,\ldots,n\},
\end{equation}
where for $x,h\in \mathbb{R}$ and $n\in\mathbb{N}$ we denoted by
\begin{equation}
x^{\left( n,h\right) }=x\left( x+h\right) \left( x+2h\right) \cdot \ldots
\cdot \left( x+\left( n-1\right) h\right)  \label{rising factorial}
\end{equation}%
the generalized (rising) factorial with increment $h$. We are using the
convention that an empty product is equal to $1$, that is $x^{\left(
0,h\right) }=1$ for any $x,h\in \mathbb{R}$. The binomial theorem for the
rising factorial shows that (\ref{Polya probabilities}) still defines a
distribution $X_n^{a,b,c}$ for real values of the parameters $a,b\ge0$ and $%
c $, satisfying the compatibility condition (\ref{hypotheses on a,b,c}).

It is known (see \cite{Kozniewska}) that the partial first absolute centered
moment of the P\'{o}lya distribution $X_{n}^{x,1-x,c}$ is given by%
\begin{equation}
E\left( \left( nx-X_{n}^{x,1-x,c}\right) 1_{X_{n}^{x,1-x,c}\leq s-1}\right)
=\sum_{k=0}^{s-1}\left( nx-k\right)
p_{n,k}^{x,1-x,c}=sp_{n,s}^{x,1-x,c}\left( 1-x+\left( n-s\right) c\right)
\label{Polya partial first centered moments}
\end{equation}%
for all $s\in \left\{ 1,\ldots ,n\right\} $.

Denoting by $\mathcal{F}\left( \left[ 0,1\right] \right) $ the set of
real-valued functions defined on $\left[ 0,1\right] $, for an integer $n\geq
1$ denote by $P_{n}^{a,b,c}:\mathcal{F}\left( \left[ 0,1\right] \right)
\rightarrow \mathcal{F}\left( \left[ 0,1\right] \right) $ the operator
defined by%
\begin{equation}
P_{n}^{a,b,c}\left( f;x\right) =E\left( \frac{1}{n}X_{n}^{a,b,c}\right)
=\sum_{k=0}^{n}f\left( \frac{k}{n}\right) p_{n}^{a,b,c}
\label{general operator}
\end{equation}%
where the parameters $a,b\geq 0$ and $c\geq -\frac{\min \left\{
x,1-x\right\} }{n-1}$ may depend on $n$ and $x\in \left[ 0,1\right] $ and
satisfy the compatibility condition (\ref{hypotheses on a,b,c}) (see \cite%
{PaPaTr1}). In particular, consider the Bernstein operator%
\begin{equation}
B_{n}\left( f;x\right) =P_{n}^{x,1-x,0}\left( f;x\right) ,
\label{Bernstein operator}
\end{equation}%
the Bernstein-Stancu operator%
\begin{equation}
P_{n}^{c}\left( f;x\right) =P_{n}^{x,1-x,c}\left( f;x\right) ,
\label{Bernstein-Stancu operator}
\end{equation}%
and the operator $R_{n}$ introduced by the last two authors in \cite{PaPaTr1}%
\begin{equation}
R_{n}\left( f;x\right) =P_{n}^{x,1-x,-\min \left\{ x,1-x\right\} /\left(
n-1\right) },  \label{Pascu operator}
\end{equation}%
which corresponds to the minimal value of the replacement parameter $c$
(satisfying the compatibility condition (\ref{hypotheses on a,b,c})) in the
case $a=x$ and $b=1-x$.

Finally, recall that a random variable $X$ is said to be smaller in the
convex order than a random variable $Y$ (denoted by $X\le_{\text{cx}}Y$) iff
\begin{equation}
E \left(\phi(X)\right) \le E \left(\phi (Y)\right)  \label{convex ordering}
\end{equation}
for any convex function $\phi:\mathbb{R} \rightarrow \mathbb{R}$ for which
the above expectations exist.

If $X$ and $Y$ are random variables for which the means $EX$, $EY$ exist and
are equal, it is known (see e.g. \cite{Shanthikumar}, Theorem 3.A.1) that $%
X\leq _{\text{cx}}Y$ iff
\begin{equation}
E\left( (t-X)_{+}\right) \leq E\left( (t-Y)_{+}\right) ,\qquad \text{for all
}t\in \mathbb{R},  \label{convex ordering biss}
\end{equation}%
where $x_{+}=\max \{x,0\}$ denotes the positive part of $x\in \mathbb{R}$.

\section{Main results\label{Section Main results}}

In order to prove the main result of this section, we begin with the
following auxiliary result, of independent interest.

\begin{lemma}
\label{interlacing lemma}For any real number $x\in \left( 0,1\right) $,
integers $n\geq 3$ and $k\in \left\{ 1,\ldots ,n-2\right\} $, there exists a
disjoint partition $\left\{ n_{1},\ldots ,n_{k}\right\} \bigsqcup \left\{
m_{1},\ldots ,m_{n-k-1}\right\} $ of the set $\left\{ 1,2,\ldots
,n-1\right\} $ such that
\begin{equation}
n_{i}\leq \frac{i}{x},\quad i\in \left\{ 1,\ldots ,k\right\}\qquad \text{and} \qquad m_{i}\leq \frac{i}{1-x},\quad i\in \left\{ 1,\ldots ,n-k-1\right\} .
\end{equation}
\end{lemma}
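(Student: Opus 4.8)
The plan is to recast the statement as a bipartite assignment problem and settle it with Hall's marriage theorem. I regard the values $\{1,2,\ldots,n-1\}$ as ``jobs'' to be distributed among $n-1$ ``slots'': $k$ slots of type $N$ carrying deadlines $1/x,2/x,\ldots,k/x$, and $n-1-k$ slots of type $M$ carrying deadlines $1/(1-x),\ldots,(n-k-1)/(1-x)$. Declaring a value $v$ adjacent to a slot precisely when $v$ does not exceed that slot's deadline, I claim the sought partition is equivalent to the existence of a perfect matching in this bipartite graph: assigning the $N$-matched values to $\{n_1,\ldots,n_k\}$ and the $M$-matched values to $\{m_1,\ldots,m_{n-k-1}\}$, and sorting each block increasingly, produces exactly the required inequalities.

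A short rearrangement argument justifies passing from an arbitrary matching to the sorted inequalities $n_i\le i/x$ (and symmetrically for the $m_i$). Indeed, the $k-i+1$ largest $N$-values occupy $k-i+1$ distinct $N$-slots, so one of these slots has index $j\le i$; the value $v\ge n_i$ sitting in it satisfies $n_i\le v\le j/x\le i/x$. Thus it suffices to verify Hall's condition on the value side.

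The core of the proof is the verification of Hall's condition, which I will reduce to the ``interval'' test sets $S=\{m,m+1,\ldots,n-1\}$, since the neighbourhood of any $S$ depends only on $\min S$. For such $S$ the requirement $|N(S)|\ge|S|$ is equivalent to bounding the number of slots whose deadline is strictly below $m$, namely $\#\{i: i/x<m\}+\#\{i: i/(1-x)<m\}\le m-1$. Counting integer solutions expresses the left-hand side as $(\lceil mx\rceil-1)+(\lceil m(1-x)\rceil-1)$, before the harmless truncations at $k$ and $n-1-k$ (which only decrease it and hence preserve the bound). The whole matter therefore rests on the ceiling identity $\lceil mx\rceil+\lceil m(1-x)\rceil\le m+1$, which holds because $mx+m(1-x)=m$ is an integer. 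This is the step I expect to require the most care, chiefly in the floor/ceiling bookkeeping and the boundary truncations, but it collapses to a one-line estimate. With Hall's condition established for every $m\in\{1,\ldots,n-1\}$, the marriage theorem yields the matching, and the rearrangement step above converts it into the desired partition.
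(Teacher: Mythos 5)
Your proof is correct, but it takes a genuinely different route from the paper's. The paper constructs the partition explicitly: it assigns to each deadline $i/x$ the integer immediately below it (with a small adjustment at integer points for the $m$-family), proves by an interlacing argument that a unit interval cannot contain two distinct points of $\left\{ \frac{1}{x},\ldots ,\frac{k}{x}\right\} \cup \left\{ \frac{1}{1-x},\ldots ,\frac{n-k-1}{1-x}\right\}$ (except for a degenerate coincidence at integers), concludes that the $n-1$ assigned integers are pairwise distinct, and finally redirects those assignments exceeding $n-1$ onto the unused elements of $\left\{1,\ldots,n-1\right\}$ via an arbitrary bijection. You instead prove pure existence through Hall's marriage theorem: since the slot-neighbourhoods of the values are nested, Hall's condition need only be checked on the interval test sets $\left\{m,\ldots,n-1\right\}$, where it collapses to the ceiling estimate $\lceil mx\rceil+\lceil m(1-x)\rceil\le m+1$, valid precisely because $mx+m(1-x)=m$ is an integer; your rearrangement step converting an arbitrary perfect matching into the sorted inequalities $n_i\le \frac{i}{x}$ is also sound (if all $k-i+1$ slots holding the largest values had index exceeding $i$, they would be distinct elements of a set of size $k-i$). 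Both arguments ultimately exploit the same arithmetic fact --- that $x$ and $1-x$ sum to $1$, so the fractional obstructions from the two families cannot accumulate --- but the paper's proof is constructive and self-contained, whereas yours is lighter on bookkeeping and more conceptual, at the cost of invoking Hall's theorem and yielding no explicit partition.
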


\begin{proof}
First note that for $i\geq 1$ the interval $\left( i,i+1\right) $ cannot
contain two distinct elements of the set $\left\{ \frac{1}{x},\ldots ,%
\frac{k}{x}\right\} $. This is so for otherwise, there would exist indices $%
1\leq j_{1}<j_{2}\leq k$ such that $i<\frac{j_{1}}{x}<\frac{j_{2}}{x}<i+1$, and therefore%
\[
\frac{1}{x}\leq \frac{j_{2}-j_{1}}{x}<i+1-i=1,
\]%
which implies $x>1$, a contradiction. A similar proof shows that the
interval $\left( i,i+1\right) $ cannot contain two distinct elements of the
set $\left\{ \frac{1}{1-x},\ldots ,\frac{n-k-1}{1-x}\right\} $.

Secondly, note that the interval $\left( i,i+1\right) $ cannot contain an
element of the set $\left\{ \frac{1}{x},\ldots ,\frac{k}{x}\right\} $ and
an element of the set $\left\{ \frac{1}{1-x},\ldots ,\frac{n-k-1}{1-x}%
\right\} $. This is so for otherwise there would exist indices $1\leq
j_{1}\leq k$ and $1\leq j_{2}\leq n-k-1$ such that $i<\frac{j_{1}}{x}<i+1$ and $i<\frac{j_{2}}{1-x}<i+1$, and therefore%
\[
i=ix+i\left( 1-x\right) <j_{1}+j_{2}<\left( i+1\right) x+\left( i+1\right)
\left( 1-x\right) =i+1,
\]%
which is impossible since $j_{1}+j_{2}$ is an integer.

A similar proof shows that if the interval $\left[ i,i+1\right] $ contains an element of the set $\left\{ \frac{1}{x},\ldots ,\frac{k}{x}\right\} $ and
an element of the set $\left\{ \frac{1}{1-x},\ldots ,\frac{n-k-1}{1-x}%
\right\} $, then they both must be equal either to $i$ or to $i+1$. Note that in the
latter case the interval $(i+1,i+2]$ cannot contain any element of the
set $\left\{ \frac{1}{x},\ldots ,\frac{k}{x}\right\} \cup \left\{
\frac{1}{1-x},\ldots ,\frac{n-k-1}{1-x}\right\} $ (since $x<1$ and $1-x<1$).

Define the sequences $\left( n_{i}^{\prime }\right) _{i=\overline{1,k}}$
and $\left( m_{i}^{\prime }\right ) _{i=\overline{1,n-k-1}}$ by $n_i^\prime=j$ if $\frac{i}{x}\in (j,j+1]$ for some $j\in \mathbb{N}$, $i\in \left\{ 1,\ldots ,k\right\}$, and%
\[
m_{i}^{\prime }=\left\{
\begin{array}{ll}
j, & \text{if }\frac{i}{1-x}\in (j,j+1)\text{ for some }j\in \mathbb{N} \\
j+1, & \text{if }\frac{i}{1-x}=j+1\text{ for some }j\in \mathbb{N}%
\end{array}%
\right. ,\qquad i\in \left\{ 1,\ldots ,n-k-1\right\} ,
\]%
thus $n_{i}^{\prime }\leq \frac{i}{x}$ for $i\in \left\{ 1,\ldots
,k\right\} $ and $m_{i}^{\prime }\leq \frac{i}{1-x}$ for $i\in \left\{
1,\ldots ,n-k-1\right\} $.

The first part of the proof shows that $n_{1}^{\prime },\ldots n_{k}^{\prime }, m_{1}^{\prime}, \ldots,m_{n-k-1}^{\prime}$ are all distinct, and in particular this
shows that the set $N= \left\{
n_{1}^{\prime },\ldots n_{k}^{\prime }, m_{1}^{\prime}, \ldots,m_{n-k-1}^{\prime}\right\}$ has $n-1$ elements.

Denoting by $r$ the number of distinct elements of the set $N \cap \left( n-1,\infty \right) $, it
follows that the set $\left\{ 1,\ldots ,n-1\right\} \backslash \left(
N \cap \lbrack 0,n-1]\right) $ also
has $r$ elements, and therefore there exists a bijection $f:N \cap \left( n-1,\infty \right)
\rightarrow \left\{ 1,\ldots ,n-1\right\} \backslash \left( N \cap \lbrack 0,n-1]\right) $.

Define
\[
n_{i}=\left\{
\begin{array}{ll}
n_{i}, & \text{if }n_{i}^{\prime }\leq n-1 \\
f\left( n_{i}^{\prime }\right) , & \text{if }n_{i}^{^{\prime }}>n-1%
\end{array}%
\right. ,\qquad i\in\{1,\ldots,k\},
\]%
and%
\[
m_{i}=\left\{
\begin{array}{ll}
m_{i}^{\prime }, & \text{if }m_{i}^{\prime }\leq n-1 \\
f\left( m_{i}^{\prime }\right) , & \text{if }m_{i}^{\prime }>n-1%
\end{array}%
\right. ,\qquad i\in \left\{ 1,\ldots ,n-k-1\right\} .
\]

It is not difficult to see that $n_1,\ldots,n_k,m_1,\ldots,m_{n-k-1}$ are distinct, $\left\{ 1,\ldots ,n-1\right\} =\left\{ n_{1},\ldots
,n_{k},m_{1},\ldots ,m_{n-k-1}\right\} $
(recall the definition of the function $f,$ in particular $f\leq n-1$), and
they satisfy
\[
n_{i}\leq \frac{i}{x},\text{\quad }i\in \left\{ 1,\ldots ,k\right\} \qquad \text{and} \qquad
m_{i}\leq \frac{i}{1-x},\text{\quad }i\in \left\{ 1,\ldots ,n-k-1\right\} ,
\]%
concluding the proof.
\end{proof}

A second auxiliary result of independent interest is the following
monotonicity of the (partial) first centered moment of P\'{o}lya random
variables.

\begin{lemma}
\label{monotonicity of Polya first centered moment} For any $x\in \left[ 0,1%
\right] $ and any integers $n\geq 1$ and $k\in \left\{ 0,1,\ldots ,n\right\}
$, the sum%
\begin{equation}
\sum_{i=0}^{k}\left( x-\frac{i}{n}\right) p_{n,i}^{x,1-x,c}
\label{claim of lemma}
\end{equation}%
is a non-decreasing function of $c\geq -\frac{1}{n-1}\min \{x,1-x\}$.

Moreover, for $x\in \left( 0,1\right) $, $n\geq 2,$ and $k\in \left\{
0,\ldots ,n-1\right\} $, the above sum is increasing with respect to $c\geq -%
\frac{1}{n-1}\min \{x,1-x\}$.
\end{lemma}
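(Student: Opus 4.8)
The plan is to reduce the claim to a clean inequality between two sums of $n-1$ rational terms each, and then to match those terms one-to-one using Lemma \ref{interlacing lemma}. First I would rewrite the sum (\ref{claim of lemma}) as $\frac{1}{n}\sum_{i=0}^{k}\left( nx-i\right) p_{n,i}^{x,1-x,c}$ and, for $k\in \left\{ 0,\ldots ,n-1\right\} $, apply the identity (\ref{Polya partial first centered moments}) with $s=k+1$ to obtain the closed form
\[
S_{k}\left( c\right) :=\sum_{i=0}^{k}\left( x-\tfrac{i}{n}\right) p_{n,i}^{x,1-x,c}=\frac{k+1}{n}\,p_{n,k+1}^{x,1-x,c}\left( 1-x+\left( n-k-1\right) c\right) .
\]
Using (\ref{Polya probabilities}) together with the rising-factorial identity $\left( 1-x\right) ^{\left( n-k-1,c\right) }\left( 1-x+\left( n-k-1\right) c\right) =\left( 1-x\right) ^{\left( n-k,c\right) }$ (immediate from (\ref{rising factorial})), this becomes
\[
S_{k}\left( c\right) =\frac{\left( k+1\right) C_{n}^{k+1}}{n}\cdot \frac{x^{\left( k+1,c\right) }\left( 1-x\right) ^{\left( n-k,c\right) }}{1^{\left( n,c\right) }}.
\]
Before treating this main case I would dispose of the trivial ones: if $x\in \left\{ 0,1\right\} $ the variable $X_{n}^{x,1-x,c}$ is degenerate and $S_{k}\equiv 0$; if $n=1$ the probabilities do not depend on $c$; and if $k=n$ then $S_{n}\left( c\right) =x-\frac{1}{n}E\,X_{n}^{x,1-x,c}=0$. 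In each of these cases $S_{k}$ is constant, hence non-decreasing, so it remains to prove strict monotonicity for $x\in \left( 0,1\right) $, $n\geq 2$ and $k\in \left\{ 0,\ldots ,n-1\right\} $, which yields both assertions at once.

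Next I would record positivity. The compatibility condition $c\geq -\frac{1}{n-1}\min \left\{ x,1-x\right\} $ forces every factor $x+jc$ $\left( 0\leq j\leq k\right) $, $1-x+jc$ $\left( 0\leq j\leq n-k-1\right) $ and $1+jc$ $\left( 0\leq j\leq n-1\right) $ to be nonnegative, and strictly positive on the open interval $c>-\frac{1}{n-1}\min \left\{ x,1-x\right\} $; in particular $1+\left( n-1\right) c\geq 1-\min \left\{ x,1-x\right\} >0$, so the denominator never vanishes on the whole range. On the open interval $S_{k}\left( c\right) >0$, and differentiating $\log S_{k}$ collapses the $j=0$ terms and gives
\[
\frac{S_{k}^{\prime }\left( c\right) }{S_{k}\left( c\right) }=\sum_{j=1}^{k}\frac{j}{x+jc}+\sum_{j=1}^{n-k-1}\frac{j}{1-x+jc}-\sum_{\ell =1}^{n-1}\frac{\ell }{1+\ell c}.
\]
Thus the whole problem reduces to showing that this expression is strictly positive.

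To prove this, for $1\leq k\leq n-2$ I would invoke Lemma \ref{interlacing lemma} to obtain a partition $\left\{ n_{1},\ldots ,n_{k}\right\} \sqcup \left\{ m_{1},\ldots ,m_{n-k-1}\right\} =\left\{ 1,\ldots ,n-1\right\} $ with $n_{i}\leq i/x$ and $m_{i}\leq i/\left( 1-x\right) $, and then reindex the last sum over this partition, so that
\[
\frac{S_{k}^{\prime }\left( c\right) }{S_{k}\left( c\right) }=\sum_{i=1}^{k}\left( \frac{i}{x+ic}-\frac{n_{i}}{1+n_{i}c}\right) +\sum_{i=1}^{n-k-1}\left( \frac{i}{1-x+ic}-\frac{m_{i}}{1+m_{i}c}\right) .
\]
A single cross-multiplication (legitimate since all denominators are positive) cancels the $c$-terms and turns each bracket into $\frac{i-n_{i}x}{\left( x+ic\right) \left( 1+n_{i}c\right) }\geq 0$, respectively $\frac{i-m_{i}\left( 1-x\right) }{\left( 1-x+ic\right) \left( 1+m_{i}c\right) }\geq 0$, the nonnegativity being exactly the content of $n_{i}\leq i/x$ and $m_{i}\leq i/\left( 1-x\right) $. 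The two remaining values $k=0$ and $k=n-1$ do not need Lemma \ref{interlacing lemma}: one of the sums is empty and the surviving terms are compared directly against $\sum_{\ell }\frac{\ell }{1+\ell c}$, each difference being $j\frac{x}{\left( 1-x+jc\right) \left( 1+jc\right) }$ or $j\frac{1-x}{\left( x+jc\right) \left( 1+jc\right) }$, which is positive since $x,1-x<1$.

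Finally, for the strict inequality it suffices to produce one strictly positive bracket. The element $1\in \left\{ 1,\ldots ,n-1\right\} $ equals some $n_{i_{0}}$ or $m_{i_{0}}$; the corresponding numerator is then $i_{0}-x>0$ or $i_{0}-\left( 1-x\right) >0$ (as $i_{0}\geq 1>x$ and $i_{0}\geq 1>1-x$), so that bracket is strict. Hence $S_{k}^{\prime }>0$ throughout the open interval, $S_{k}$ is strictly increasing there, and by continuity of $S_{k}$ up to the endpoint it is strictly increasing on the closed range $c\geq -\frac{1}{n-1}\min \left\{ x,1-x\right\} $; together with the trivial cases this gives both statements. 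The main obstacle is the step that matches the two groups of terms: the whole argument hinges on recognizing that Lemma \ref{interlacing lemma} furnishes precisely the index pairing for which the comparison collapses to the sign conditions $i-n_{i}x\geq 0$ and $i-m_{i}\left( 1-x\right) \geq 0$; once the closed form and the logarithmic-derivative reduction are set up, everything else is routine.
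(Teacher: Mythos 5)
Your proof is correct and takes essentially the same route as the paper's: the closed form via identity (\ref{Polya partial first centered moments}) with $s=k+1$, logarithmic differentiation reducing everything to the inequality $\sum_{j=1}^{k}\frac{j}{x+jc}+\sum_{j=1}^{n-k-1}\frac{j}{1-x+jc}>\sum_{\ell=1}^{n-1}\frac{\ell}{1+\ell c}$, and Lemma \ref{interlacing lemma} to match the terms one-to-one (your cross-multiplication is just a hands-on verification of the paper's observation that $\varphi(u)=u/(1+uc)$ is increasing). Your explicit strictness step, exhibiting the element $1$ of the partition as a strictly positive bracket, is in fact slightly more careful than the paper, which asserts the strict inequality without isolating such a term.
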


\begin{proof}
For $k=n$ we have
\begin{equation}
\sum_{i=0}^{n}\left( x-\frac{i}{n}\right) p_{n,i}^{x,1-x,c}=x-\frac{1}{n}%
EX_{n}^{x,1-x,c}=x-x=0,
\end{equation}%
and for $k=0$ we have
\begin{equation}
\sum_{i=0}^{0}\left( x-\frac{i}{n}\right)
p_{n,i}^{x,1-x,c}=xp_{n,0}^{x,1-x,c}=x\left( 1-x\right) \left( 1-x+c\right)
\cdot \ldots \cdot \left( 1-x+(n-1)c\right) ,
\end{equation}%
thus the claim of the lemma is true in these cases.

Also, since for $x=0$ and $x=1$ the probabilities $p_{n,i}^{x,1-x,c}$ are
independent on the value of $c$ ($p_{n,0}^{0,1,c}=p_{n,n}^{1,0,c}=1$, and $%
p_{n,i}^{x,1-x,c}=0$ for $i\in \left\{ 1,\ldots ,n-1\right\} $), the claim
of the lemma is also true in these cases.

Without loss of generality we may therefore assume that $x\in (0,1)$, $k\in
\{1,\ldots ,n-1\}$, and $n\geq 2$. Since the expression in (\ref{claim of
lemma}) is a continuous, differentiable function of $c\geq -\frac{1}{n-1}%
\min \{x,1-x\}$, in order to prove the claim of the lemma, using (\ref{Polya
partial first centered moments}), it suffices to show that for any $c>-\frac{1}{n-1}\min \{x,1-x\}$ we have
\begin{equation}
\frac{\partial }{\partial c}\sum_{i=0}^{k}\left( nx-i\right)
p_{n,i}^{x,1-x,c}=\frac{\partial }{\partial c}\left( \left( k+1\right)
p_{n,k+1}^{x,1-x,c}\left( 1-x+\left( n-\left( k+1\right) \right) c\right)
\right) >0.
\end{equation}%

Taking logarithms and using (\ref{Polya probabilities}), we have left to
prove that%
\[
\frac{\partial }{\partial c}\left( \ln \left( \left( k+1\right) C_{n}^{k+1}%
\frac{x^{\left( k+1,c\right) }\left( 1-x\right) ^{\left( n-k-1,c\right)
}\left( 1-x+\left( n-k-1\right) c\right) }{1^{\left( n,c\right) }}\right)
\right) >0,
\]%
or equivalent%
\begin{equation}
\sum_{i=0}^{k}\frac{i}{x+ic}+\sum_{i=0}^{n-k-1}\frac{i}{1-x+ic}%
>\sum_{i=0}^{n-1}\frac{i}{1+ic},  \label{claim 1}
\end{equation}%
for any $c>-\frac{1}{n-1}\min \{x,1-x\}$.

For $k=n-1$ the above inequality is readily satisfied (recall that $x\in
\left( 0,1\right) $), so we have left to consider the case $n\geq 3$ and $%
k\in \left\{ 1,\ldots ,n-2\right\} $.

For arbitrarily fixed $n\geq 3$, $k\in \left\{ 1,\ldots ,n-2\right\} $, $%
x\in \left( 0,1\right) $, and $c>-\frac{1}{n-1}\min \{x,1-x\}$, the function
$\varphi :\left[ 1,\frac{n-1}{\min \left\{ x,1-x\right\} }\right]
\rightarrow \mathbb{R}$ defined by $\varphi \left( u\right) =\frac{u}{1+uc}$ is increasing.

Lemma \ref{interlacing lemma} shows that we can find a partition $\left\{
n_{1},\ldots ,n_{k}\right\} \bigsqcup \left\{ m_{1},\ldots
,m_{n-k-1}\right\} $ of the set $\left\{ 1,2,\ldots ,n-1\right\} $ such that
$n_{i}\leq \frac{i}{x}$ for $i\in \left\{ 1,\ldots ,k\right\} $ and $%
m_{i}\leq \frac{i}{1-x}$ for $i\in \left\{ 1,\ldots ,n-k-1\right\} $, and
therefore we obtain%
\[
\sum_{i=1}^{k}\varphi \left( \frac{i}{x}\right) +\sum_{i=1}^{n-k-1}\varphi
\left( \frac{i}{1-x}\right) >\sum_{i=1}^{k}\varphi \left( n_{i}\right)
+\sum_{i=1}^{n-k-1}\varphi \left( m_{i}\right) =\sum_{i=1}^{n-1}\varphi
\left( i\right) ,
\]%
which is equivalent to (\ref{claim 1}), thus concluding the proof.
\end{proof}

With the above preparation we can now proceed to prove the convex ordering
of P\'{o}lya random variables $X_{n}^{a,b,c}$ with respect to the
replacement parameter $c$. The precise statement is the following.

\begin{theorem}
\label{Theorem on Polya convex ordering}The P\'{o}lya random variables $%
X_{n}^{x,1-x,c}$ satisfy the following convex ordering%
\begin{equation}
X_{n}^{x,1-x,c}\leq _{\text{cx}}X_{n}^{x,1-x,c^{\prime }},
\label{convex ordering for Polya}
\end{equation}%
for any integer $n\geq 1$, $x\in \left[ 0,1\right] $, and any $c^{\prime
}\geq c\geq -\frac{1}{n-1}\min \left\{ x,1-x\right\} $.
\end{theorem}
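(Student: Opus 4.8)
The plan is to invoke the equal‑means characterization \eqref{convex ordering biss}. Since $EX_{n}^{x,1-x,c}=nx$ does not depend on $c$ (this is exactly the $k=n$ identity already used at the start of the proof of Lemma \ref{monotonicity of Polya first centered moment}, or simply the fact that $P_{n}^{x,1-x,c}$ reproduces affine functions), the ordering \eqref{convex ordering for Polya} is equivalent to $E\!\left((t-X_{n}^{x,1-x,c})_{+}\right)\le E\!\left((t-X_{n}^{x,1-x,c'})_{+}\right)$ for every $t\in\mathbb{R}$. The cases $n=1$ and $x\in\{0,1\}$ are trivial, because then $p_{n,i}^{x,1-x,c}$ is independent of $c$; so I fix $n\ge 2$, $x\in(0,1)$ and $c'\ge c$, and aim to show that $c\mapsto h_{c}(t):=E\!\left((t-X_{n}^{x,1-x,c})_{+}\right)=\sum_{i=0}^{n}(t-i)_{+}\,p_{n,i}^{x,1-x,c}$ is non-decreasing for each fixed $t$.

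Because $X_{n}^{x,1-x,c}$ is supported on $\{0,\dots,n\}$, the function $h_{c}$ is affine on every interval $[s-1,s]$, and $c\mapsto p_{n,i}^{x,1-x,c}$ is a smooth rational function on the admissible range, continuous up to its left endpoint. Hence it suffices to prove $\partial_{c}h_{c}(t)\ge 0$ for all interior $c$ and all $t$, and then integrate. For $t\in[s-1,s]$ one has $\partial_{c}h_{c}(t)=\sum_{i=0}^{s-1}(t-i)\,\partial_{c}p_{n,i}^{x,1-x,c}$, which is the affine interpolation between the two endpoint values $V_{s-1}$ and $V_{s}$, where
\[
V_{s}:=\partial_{c}h_{c}(s)=\sum_{i=0}^{s-1}(s-i)\,\partial_{c}p_{n,i}^{x,1-x,c}.
\]
Thus everything reduces to establishing $V_{s}\ge 0$ for every integer $s\in\{1,\dots,n-1\}$ (the endpoint values $V_{0}=V_{n}=0$ being clear, the latter since $EX_{n}^{x,1-x,c}=nx$ is constant in $c$).

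To prove $V_{s}\ge 0$ I use Lemma \ref{monotonicity of Polya first centered moment} in the scaled form $\alpha_{j}:=\partial_{c}\sum_{i=0}^{j}(nx-i)p_{n,i}^{x,1-x,c}=n\,\partial_{c}\!\left(\sum_{i=0}^{j}\left(x-\tfrac{i}{n}\right)p_{n,i}^{x,1-x,c}\right)\ge 0$ for $j\in\{0,\dots,n-1\}$. The reflection $p_{n,i}^{x,1-x,c}=p_{n,n-i}^{1-x,x,c}$ gives $X_{n}^{x,1-x,c}\stackrel{d}{=}n-X_{n}^{1-x,x,c}$, and convex order is invariant under $u\mapsto n-u$; since $E\!\left((s-X_{n}^{x,1-x,c})_{+}\right)=E\!\left((X_{n}^{1-x,x,c}-(n-s))_{+}\right)$ differs from $E\!\left(((n-s)-X_{n}^{1-x,x,c})_{+}\right)$ by a constant independent of $c$, the computation for the pair $(x,s)$ with $s>nx$ is identical to that for $(1-x,\,n-s)$ with $n-s<n(1-x)$. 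I may therefore assume $s\le nx$, so that $nx-i\ge nx-s+1\ge 1$ for $0\le i\le s-1$. Writing $\partial_{c}p_{n,i}^{x,1-x,c}=(\alpha_{i}-\alpha_{i-1})/(nx-i)$ (with $\alpha_{-1}=0$) and summing by parts yields, with $w_{i}=\frac{s-i}{nx-i}$,
\[
V_{s}=w_{s-1}\,\alpha_{s-1}+\sum_{i=0}^{s-2}\left(w_{i}-w_{i+1}\right)\alpha_{i},\qquad w_{i}-w_{i+1}=\frac{nx-s}{(nx-i)(nx-i-1)}\ge 0 .
\]
Since $w_{s-1}=1/(nx-s+1)>0$ and all $\alpha_{i}\ge 0$, we get $V_{s}\ge 0$; integrating over $c$ gives the monotonicity of $h_{c}(t)$ for every $t$, and \eqref{convex ordering biss} then yields \eqref{convex ordering for Polya}.

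I expect the main obstacle to be precisely the sign bookkeeping in the last display: the natural weights $w_{i}=(s-i)/(nx-i)$ are positive and non-increasing, so that $V_{s}$ is a genuinely non-negative combination of the quantities $\alpha_{i}$ furnished by Lemma \ref{monotonicity of Polya first centered moment}, only in the range $s\le nx$; for $s>nx$ these weights change sign and the argument fails, which is why the reflection symmetry $X_{n}^{x,1-x,c}\stackrel{d}{=}n-X_{n}^{1-x,x,c}$ is essential to fold every threshold into the favorable range.
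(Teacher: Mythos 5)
Your proposal is correct, and it follows the paper's overall skeleton: the equal-means characterization (\ref{convex ordering biss}), the reduction (via piecewise affinity of $t\mapsto E\left((t-X)_{+}\right)$) to integer thresholds, and the use of Lemma \ref{monotonicity of Polya first centered moment} as the essential input. Where you genuinely diverge is in the bridge from that lemma (whose weights $nx-i$ are centered at the mean) to the threshold sums $V_{s}$ (whose weights $s-i$ are centered at $s$). The paper argues by contradiction and induction on the threshold: assuming the sum at threshold $k$ is negative, it writes $x$ as an affine combination of $\frac{k-1}{n},\frac{k}{n}$ (when $nx>k$) or of $\frac{k}{n},\frac{k+1}{n}$ (when $nx\le k$), uses the lemma to propagate the negativity down to $k=0$ or up to $k=n$, and contradicts the endpoint identities there. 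You instead make the bridge constructive: Abel summation gives $V_{s}=w_{s-1}\alpha_{s-1}+\sum_{i=0}^{s-2}(w_{i}-w_{i+1})\alpha_{i}$ with explicitly non-negative coefficients when $s\le nx$, and the complementary range $s>nx$ is folded back by the reflection $X_{n}^{x,1-x,c}\stackrel{d}{=}n-X_{n}^{1-x,x,c}$, which is legitimate: $p_{n,i}^{x,1-x,c}=p_{n,n-i}^{1-x,x,c}$ follows from (\ref{Polya probabilities}), and the two truncated expectations differ by a $c$-independent constant because the mean is fixed. The two case splits mirror each other ($s\le nx$ versus $s>nx$, matching the paper's $nx>k$ versus $nx\le k$), but your version replaces the induction/contradiction by an identity exhibiting $V_{s}$ as a manifestly non-negative combination of the lemma's quantities, which is more transparent and even quantitative (it yields a lower bound on $V_{s}$); the paper's induction, in exchange, never divides by $nx-i$ (so it needs no care about the sign or size of denominators) and needs no reflection symmetry. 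The details I checked all go through: $nx-i\ge nx-s+1\ge 1$ in the favorable range, $w_{i}-w_{i+1}=\frac{nx-s}{(nx-i)(nx-i-1)}\ge 0$ for $i\le s-2$, and the endpoint values $V_{0}=V_{n}=0$.
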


\begin{proof}
Since $EX_{n}^{x,1-x,c}=nx$ for any value of $c$ satisfying the
compatibility condition $c\geq -\frac{1}{n-1}\min \left\{ x,1-x\right\} $,
the claim of the theorem is equivalent by (\ref{convex ordering biss}) to
\begin{equation}
\frac{\partial }{\partial c}E\left( \left( t-X_{n}^{x,1-x,c}\right)
_{+}\right) \geq 0,\qquad t\in \mathbb{R}.
\end{equation}

Since $X_{n}^{x,1-x,c}$ takes values in $\left\{ 0,1,\ldots ,n\right\} $ and
$EX_{n}^{x,1-x,c}=nx$ is independent of $c$, it is easy to see that the
above inequality is satisfied for $t<0$ and $t>n$, thus it remains to prove
it for $t\in \lbrack 0,n]$.

Replacing $t$ by $n t$ with $t\in[0,1]$, the above inequality is thus
equivalent to
\begin{equation}
\frac{\partial }{\partial c}E\left( t-\frac{1}{n}X_{n}^{x,1-x,c}\right)
_{+}=\sum_{i=0}^{\left[ nt\right] }\left( \left( t-\frac{i}{n}\right) \frac{%
\partial }{\partial c}p_{n,i}^{x,1-x,c}\right) \geq 0,\qquad t\in \left[ 0,1%
\right] .  \label{aux 6}
\end{equation}

The above inequality is further equivalent to the apparent weaker inequality%
\begin{equation}  \label{aux 7}
\sum_{i=0}^{k}\left( \left( \frac{k}{n}-\frac{i}{n}\right) \frac{\partial }{%
\partial c}p_{n,i}^{x,1-x,c}\right) \geq 0,\qquad k\in \left\{ 0,1,\ldots
,n\right\} ,
\end{equation}%
the reason being the following.

For $t=1=\frac{n}{n}$ the inequality (\ref{aux 6}) follows immediately from (%
\ref{aux 7}). For $t\in \lbrack 0,1)$, we can write $t=\alpha \frac{k}{n}%
+\left( 1-\alpha \right) \frac{k+1}{n}$ as a convex combination of $\frac{k}{%
n}$ and $\frac{k+1}{n}$, where $k=\left[ nt\right] $ and $\alpha =k+1-nt\in
\lbrack 0,1)$. If the inequality (\ref{aux 7}) holds true, then%
\begin{eqnarray*}
&&\frac{\partial }{\partial c}\sum_{i=0}^{[nt]}\left( \left( t-\frac{i}{n}%
\right) p_{n,i}^{x,1-x,c}\right) \\
&=&\frac{\partial }{\partial c}\left(
\alpha \sum_{i=0}^{k}\left( \left( \frac{k}{n}-\frac{i}{n}\right)
p_{n,i}^{x,1-x,c}\right) +\left( 1-\alpha \right) \sum_{i=0}^{k}\left(
\left( \frac{k+1}{n}-\frac{i}{n}\right) p_{n,i}^{x,1-x,c}\right) \right) \\
&=&\frac{\partial }{\partial c}\left( \alpha \sum_{i=0}^{k}\left( \left(
\frac{k}{n}-\frac{i}{n}\right) p_{n,i}^{x,1-x,c}\right) +\left( 1-\alpha
\right) \sum_{i=0}^{k+1}\left( \left( \frac{k+1}{n}-\frac{i}{n}\right)
p_{n,i}^{x,1-x,c}\right) \right) \\
&=&\alpha \sum_{i=0}^{k}\left( \left( \frac{k}{n}-\frac{i}{n}\right) \frac{%
\partial p_{n,i}^{x,1-x,c}}{\partial c}\right) +\left( 1-\alpha \right)
\sum_{i=0}^{k+1}\left( \left( \frac{k+1}{n}-\frac{i}{n}\right) \frac{%
\partial p_{n,i}^{x,1-x,c}}{\partial c}\right) \\
&\geq &0,
\end{eqnarray*}%
by Lemma \ref{monotonicity of Polya first centered moment}, thus proving (%
\ref{aux 6}).

In order to prove (\ref{aux 7}), first note that the claim is trivial if $%
x=1 $, for in this case $p_{n,n}^{1,0,c}=1$ and $p_{n,i}^{1,0,c}=0$ for $%
i\in \left\{ 0,\ldots ,n-1\right\} $ (thus $p_{n,i}^{1,0,c}$ is independent
of the value of the replacement parameter $c$). Without loss of generality
we can therefore assume that $x\neq 1$.

Suppose that (\ref{aux 7}) does not hold for a certain $x\in[0,1)$ and $k\in
\{0,1,\ldots, n\}$, that is
\begin{equation}  \label{aux 10}
\sum_{i=0}^{k}\left( \left( \frac{k}{n}-\frac{i}{n}\right) \frac{\partial }{%
\partial c}p_{n,i}^{x,1-x,c}\right) < 0.
\end{equation}

We distinguish the following cases.

\begin{itemize}
\item[i)] $nx>k$

Note that in this case we cannot have $k=0$, for in this case the sum in (%
\ref{aux 10}) is equal to $0$. Since $k\geq 1$ and $x\in[0,1)$ we can write $%
x=\alpha \frac{k-1}{n}+\left( 1-\alpha \right) \frac{k}{n}$, where $\alpha
=k-nx<0$.

Using again Lemma \ref{monotonicity of Polya first centered moment} and the
same argument as above we obtain%
\begin{eqnarray*}
&&\alpha \sum_{i=0}^{k-1}\left( \left( \frac{k-1}{n}-\frac{i}{n}\right)
\frac{\partial p_{n,i}^{x,1-x,c}}{\partial c}\right) +\left( 1-\alpha
\right) \sum_{i=0}^{k}\left( \left( \frac{k}{n}-\frac{i}{n}\right) \frac{%
\partial p_{n,i}^{x,1-x,c}}{\partial c}\right) \\
&=&\alpha \sum_{i=0}^{k-1}\left( \left( \frac{k-1}{n}-\frac{i}{n}\right)
\frac{\partial p_{n,i}^{x,1-x,c}}{\partial c}\right) +\left( 1-\alpha
\right) \sum_{i=0}^{k-1}\left( \left( \frac{k}{n}-\frac{i}{n}\right) \frac{%
\partial p_{n,i}^{x,1-x,c}}{\partial c}\right) \\
&=&\frac{\partial }{\partial c}\sum_{i=0}^{k-1}\left( \left( \alpha \left(%
\frac{k-1}{n}-\frac{i}{n}\right)+\left( 1-\alpha \right)\left( \frac{k}{n}-%
\frac{i}{n}\right)\right) p_{n,i}^{x,1-x,c}\right) \\
&=&\sum_{i=0}^{k-1}\left( \left( x-\frac{i}{n}\right) \frac{\partial
p_{n,i}^{x,1-x,c}}{\partial c}\right) \\
&\geq &0,
\end{eqnarray*}%
and therefore (the second sum on the first line above being assumed to be
strictly negative, and since $\alpha <0$) we deduce that%
\begin{equation}  \label{aux 11}
\sum_{i=0}^{k-1}\left( \left( \frac{k-1}{n}-\frac{i}{n}\right) \frac{%
\partial p_{n,i}^{x,1-x,c}}{\partial c}\right) <0.
\end{equation}

We showed that if the sum in (\ref{aux 10}) is strictly negative, then so is
the sum in (\ref{aux 11}). Proceeding inductively on $k$, this implies that%
\[
0\equiv \sum_{i=0}^{0}\left( \left( \frac{0}{n}-\frac{i}{n}\right) \frac{%
\partial p_{n,i}^{x,1-x,c}}{\partial c}\right) <0,
\]%
a contradiction.

\item[ii)] $nx\leq k < k+1$

Note that in this case we cannot have $k=n$, for
\begin{equation}
\sum_{i=0}^{n}\left( \left( \frac{n}{n}-\frac{i}{n}\right) \frac{\partial }{%
\partial c}p_{n,i}^{x,1-x,c}\right) =\frac{\partial}{\partial c} E \left( 1
- \frac{1}{n}X_{n}^{x,1-x,c}\right)= \frac{\partial}{\partial c} \left(1-
x\right)\equiv 0.
\end{equation}

Since $k\leq n-1$ and $x\in[0,1)$ we can write $x=\alpha \frac{k}{n}+\left(
1-\alpha \right) \frac{k+1}{n}$, where $k+1\leq n$ and $\alpha =k+1-nx\geq 1
$.

We have%
\begin{eqnarray*}
&&\alpha \sum_{i=0}^{k}\left( \left( \frac{k}{n}-\frac{i}{n}\right) \frac{%
\partial p_{n,i}^{x,1-x,c}}{\partial c}\right) +\left( 1-\alpha \right)
\sum_{i=0}^{k+1}\left( \left( \frac{k+1}{n}-\frac{i}{n}\right) \frac{%
\partial p_{n,i}^{x,1-x,c}}{\partial c}\right) \\
&=&\alpha \sum_{i=0}^{k}\left( \left( \frac{k}{n}-\frac{i}{n}\right) \frac{%
\partial p_{n,i}^{x,1-x,c}}{\partial c}\right) +\left( 1-\alpha \right)
\sum_{i=0}^{k}\left( \left( \frac{k+1}{n}-\frac{i}{n}\right) \frac{\partial
p_{n,i}^{x,1-x,c}}{\partial c}\right) \\
&=&\frac{\partial }{\partial c}\sum_{i=0}^{k}\left( \left( \alpha \frac{k}{n}%
+\left( 1-\alpha \right) \frac{k+1}{n}-\frac{i}{n}\right)
p_{n,i}^{x,1-x,c}\right) \\
&=&\frac{\partial }{\partial c}\sum_{i=0}^{k}\left( \left( x-\frac{i}{n}%
\right) p_{n,i}^{x,1-x,c}\right) \\
&\geq &0
\end{eqnarray*}%
by Lemma \ref{monotonicity of Polya first centered moment}.

Using (\ref{aux 10}) and the fact that $\alpha \geq 1$, from the above
inequality we conclude that
\begin{equation}
\sum_{i=0}^{k+1}\left( \left( \frac{k+1}{n}-\frac{i}{n}\right) \frac{%
\partial p_{n,i}^{x,1-x,c}}{\partial c}\right) <0.  \label{aux 12}
\end{equation}

We showed that if the sum in (\ref{aux 10}) is strictly negative, then so is
the sum in (\ref{aux 12}). Proceeding inductively on $k$, we obtain%
\[
0\equiv \frac{\partial }{\partial c}\left( 1-x\right) =\frac{\partial }{%
\partial c}E\left( 1-\frac{1}{n}X_{n}^{x,1-x,c}\right) =\sum_{i=0}^{n}\left(
\left( \frac{n}{n}-\frac{i}{n}\right) \frac{\partial p_{n,i}^{x,1-x,c}}{%
\partial c}\right) <0,
\]%
a contradiction.
\end{itemize}

In both cases above we showed that (\ref{aux 10}) leads to a contradiction.
This shows that (\ref{aux 7}) holds true, thus concluding the proof of the
theorem.
\end{proof}

\section{An application to the error estimate of P\'{o}lya-Stancu operators
\label{Section Applications}}

As an application of Theorem \ref{Theorem on Polya convex ordering}, we have
the following.

\begin{theorem}
\label{Theorem on monotonicty of error}For any convex function $f:\left[ 0,1%
\right] \rightarrow \mathbb{R}$ the absolute value of the error of
approximation of the P\'{o}lya-Stancu operator $P_{n}^{c}$ is a
non-decreasing function of $c$, that is
\begin{equation}
\left\vert P_{n}^{c_{2}}f\left( x\right) -f\left( x\right) \right\vert \geq
\left\vert P_{n}^{c_{1}}f\left( x\right) -f\left( x\right) \right\vert ,
\label{monotonicity of error estimate}
\end{equation}%
for any integer $n\geq 2$, $x\in \left[ 0,1\right] $, and any $%
c_{2}>c_{1}\geq -\frac{1}{n-1}\min \left\{ x,1-x\right\} $.

If moreover%
\begin{equation}
B_{n}f\left( x\right) \neq B_{1}f\left( x\right)
\end{equation}%
for certain values of $n\geq 2$ and $x\in \left[ 0,1\right] $, then the
above monotonicity is strict, that is
\begin{equation}
\left\vert P_{n}^{c_{2}}f\left( x\right) -f\left( x\right) \right\vert
>\left\vert P_{n}^{c_{1}}f\left( x\right) -f\left( x\right) \right\vert ,
\end{equation}%
for any $c_{2}>c_{1}\geq -\frac{1}{n-1}\min \left\{ x,1-x\right\} $.
\end{theorem}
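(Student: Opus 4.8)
The plan is to reduce the statement about the \emph{absolute} error to a monotonicity statement about the \emph{signed} error $P_{n}^{c}f(x)-f(x)$, which for a convex $f$ is automatically non-negative. First I would observe that, writing $P_{n}^{c}f(x)=E\big(f(\tfrac1n X_{n}^{x,1-x,c})\big)$ and noting that $\tfrac1n X_{n}^{x,1-x,c}$ has mean $x$ (independent of $c$, since $EX_{n}^{x,1-x,c}=nx$), Jensen's inequality gives $P_{n}^{c}f(x)\ge f(x)$ for every admissible $c$. Hence the absolute value in (\ref{monotonicity of error estimate}) may be dropped, $|P_{n}^{c}f(x)-f(x)|=P_{n}^{c}f(x)-f(x)$, and it suffices to prove that $c\mapsto P_{n}^{c}f(x)$ is non-decreasing (respectively increasing).

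For the monotonicity itself I would invoke Theorem \ref{Theorem on Polya convex ordering}. After extending $f$ to a convex function on all of $\mathbb{R}$ (e.g.\ by affine continuation beyond the endpoints) and setting $\phi(y)=f(y/n)$, which is convex, the definition (\ref{convex ordering}) of the convex order together with $X_{n}^{x,1-x,c_{1}}\le_{\mathrm{cx}}X_{n}^{x,1-x,c_{2}}$ yields $P_{n}^{c_{1}}f(x)=E\phi(X_{n}^{x,1-x,c_{1}})\le E\phi(X_{n}^{x,1-x,c_{2}})=P_{n}^{c_{2}}f(x)$ for $c_{2}>c_{1}$ (the support of $X_{n}^{x,1-x,c}$ lies in $[0,n]$, where $\phi$ and $f(\cdot/n)$ agree). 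Combined with the previous paragraph this gives the non-strict inequality (\ref{monotonicity of error estimate}) for all $x\in[0,1]$, the cases $x\in\{0,1\}$ being trivial since there $P_{n}^{c}f(x)=f(x)$ for every $c$.

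The delicate point is the strict inequality, where the role of the hypothesis $B_{n}f(x)\ne B_{1}f(x)$ must be clarified. The key observation is that $P_{n}^{c}f(x)=\sum_{k=0}^{n}f(k/n)\,p_{n,k}^{x,1-x,c}$ is a \emph{rational function of $c$}, whose denominator $\prod_{j=1}^{n-1}(1+jc)$ is strictly positive on the admissible range $c\ge-\tfrac{1}{n-1}\min\{x,1-x\}$; hence $c\mapsto P_{n}^{c}f(x)$ is real-analytic there. Moreover it interpolates precisely the two Bernstein values in the hypothesis: at $c=0$ it equals $B_{n}f(x)$, while as $c\to\infty$ the P\'olya urn degenerates (the middle probabilities $p_{n,k}^{x,1-x,c}$, $1\le k\le n-1$, vanish like $1/c$, whereas $p_{n,0}^{x,1-x,c}\to 1-x$ and $p_{n,n}^{x,1-x,c}\to x$), so that $\lim_{c\to\infty}P_{n}^{c}f(x)=(1-x)f(0)+xf(1)=B_{1}f(x)$. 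I would then argue by contradiction: if $P_{n}^{c_{1}}f(x)=P_{n}^{c_{2}}f(x)$ for some $c_{2}>c_{1}$, then by the monotonicity already established $c\mapsto P_{n}^{c}f(x)$ is constant on $[c_{1},c_{2}]$; a rational function agreeing with a constant on an interval is identically that constant, so $P_{n}^{c}f(x)$ would be constant on the whole (connected) admissible range, forcing $B_{n}f(x)=B_{1}f(x)$, contrary to hypothesis. (The hypothesis also forces $x\in(0,1)$, since for $x\in\{0,1\}$ one has $B_{n}f(x)=f(x)=B_{1}f(x)$, so $c=0$ lies in the interior of the range.) Therefore $P_{n}^{c_{1}}f(x)<P_{n}^{c_{2}}f(x)$, and since both errors are non-negative this upgrades to a strict inequality of the absolute errors.

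The main obstacle is exactly this last step: upgrading ``non-decreasing'' to ``strictly increasing on every subinterval''. The cleanest route is the rationality/analyticity argument above, together with the identification of the two boundary values $B_{n}f(x)$ and $B_{1}f(x)$; the degeneracy computation as $c\to\infty$ is the one genuinely computational point and should be carried out with care. A more hands-on alternative would be to differentiate and use the strict part of Lemma \ref{monotonicity of Polya first centered moment}, writing $\frac{\partial}{\partial c}P_{n}^{c}f(x)$ via a double Abel summation as a non-negative combination $\sum_{i}\big(f(\tfrac{i+2}{n})-2f(\tfrac{i+1}{n})+f(\tfrac{i}{n})\big)\,w_{i}(c)$ of second differences of $f$ with non-negative weights controlled by the lemma; however, deciding exactly when all such terms vanish leads back to the same condition $B_{n}f(x)=B_{1}f(x)$, so I would prefer the analyticity argument.
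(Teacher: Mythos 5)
Your proposal is correct and follows essentially the same route as the paper: Jensen's inequality to drop the absolute value, Theorem \ref{Theorem on Polya convex ordering} for the monotonicity, and the rationality of $c\mapsto P_{n}^{c}f(x)$ (constant on an interval forces constant everywhere) for the strict part. Your identification of the constant via the value at $c=0$ and the limit as $c\to\infty$ is the same computation as the paper's comparison of the free and leading coefficients of the proportional numerator and denominator polynomials, so the difference is purely cosmetic.
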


\begin{proof}
Jensen's inequality shows that%
\[
P_{n}^{c}f\left( x\right) =Ef\left( \frac{1}{n}X_{n}^{x,1-x,c}\right) \geq
f\left( \frac{1}{n}EX_{n}^{x,1-x,c}\right) =f\left( x\right) ,\qquad c\geq -%
\frac{1}{n-1}\min \left\{ x,1-x\right\} ,
\]%
thus the claim (\ref{monotonicity of error estimate}) is equivalent to $P_{n}^{c_{2}}f\left( x\right) \geq P_{n}^{c_{1}}f\left( x\right)$,
and it follows immediately from Theorem \ref{Theorem on Polya convex
ordering} and the definition (\ref{convex ordering}) of convex ordering.

To prove the second part of the theorem, note that if for certain values $n\geq 2$ and $%
x\in \left[ 0,1\right] $ we have $P_{n}^{c_{2}}f\left( x\right) =P_{n}^{c_{1}}f\left( x\right)$
for some $c_{2}>c_{1}\geq -\frac{1}{n-1}\min \left\{ x,1-x\right\} $, then by
the first part of the proof we have that
\begin{equation}
P_{n}^{c}f\left( x\right) =C,\qquad c\in \left[ c_{1},c_{2}\right] ,
\label{aux 15}
\end{equation}
is a constant function of $c$ (the constant $C$ may depend on $n$ and $x$).

The above still holds if we replace $f$ by $f+1+\max_{x\in \left[ 0,1\right]
}f\left( x\right) $ (being convex, $f$ is also continuous), thus without
loss of generality we may assume that $f\left( x\right) >0$ for $x\in \left[
0,1\right] $.

Next note that for $x\in \left( 0,1\right) $, from the definition (\ref%
{rising factorial}) of the rising factorial, it follows that $x^{\left(
k,c\right) }\left( 1-x\right) ^{\left( n-k,c\right) }$ is a polynomial of
degree $n-2$ in the variable $c$ if $k\in \left\{ 1,\ldots ,n-1\right\} $
(with leading coefficient $x\left( 1-x\right) \left( k-1\right) !\left(
n-k-1\right) !$), and a polynomial of degree $n-1$ in the variable $c$ if $%
k\in \left\{ 0,n\right\} $ (with leading coefficient $\left( n-1\right)
!\left( 1-x\right) $ for $k=0$ and $\left( n-1\right) !x$ for $k=n$). Using
this and the definition (\ref{Bernstein-Stancu operator}) of the operator $P_{n}^{c}$%
, it can be seen that for fixed values $n\geq 2$, $x\in \left[ 0,1\right] $
and $f$,
\begin{eqnarray*}
P_{n}^{c}f\left( x\right) &=&\frac{\sum_{k=0}^{n}\left( f\left( \frac{k}{n}\right) C_{n}^{k}x^{\left(
k,c\right) }\left( 1-x\right) ^{\left( n-k,c\right) }\right) }{1^{\left(
n,c\right) }} \\
&=&\frac{\left( \left( n-1\right) !f\left( 0\right) \left( 1-x\right)
+\left( n-1\right) !f\left( 1\right) x\right) c^{n-1}+\ldots
+\sum_{k=0}^{n}f\left( \frac{k}{n}\right) x^{k}\left( 1-x\right) ^{n-k}}{%
\left( n-1\right) !c^{n-1}+\ldots +1}
\end{eqnarray*}%
is the ratio of two polynomials of degree $n-1$ in the variable $c\geq -%
\frac{1}{n-1}\min \left\{ x,1-x\right\} $ (recall that $f>0$ and the special
cases $k=0$ and $k=n$ above).

From (\ref{aux 15}) we conclude that these two polynomials (in the variable $%
c$) are a constant multiple of each other; in particular this shows that
their leading and free term coefficients are proportional, thus%
\[
\frac{\left( \left( n-1\right) !f\left( 0\right) \left( 1-x\right) +\left(
n-1\right) !f\left( 1\right) x\right) }{\left( n-1\right) !}=\frac{%
\sum_{k=0}^{n}f\left( \frac{k}{n}\right) x^{k}\left( 1-x\right) ^{n-k}}{1},
\]%
or equivalent%
\[
f\left( 0\right) \left( 1-x\right) +f\left( 1\right) x=\sum_{k=0}^{n}f\left(
\frac{k}{n}\right) x^{k}\left( 1-x\right) ^{n-k}.
\]

Finally, note that $\sum_{k=0}^{n}f\left( \frac{k}{n}\right) x^{k}\left(
1-x\right) ^{n-k}=B_{n}f\left( x\right) $ is just the Bernstein polynomial
of degree $n$ corresponding to $f$ evaluated at $x$, and $f\left( 0\right)
\left( 1-x\right) +f\left( 1\right) x=B_{1}f\left( x\right) $, thus we have
equivalent%
\begin{equation}
B_{n}f\left( x\right) =B_{1}f\left( x\right) .  \label{aux 16}
\end{equation}

We have shown that (\ref{aux 15}) implies the condition (\ref{aux 16}).
Therefore if (\ref{aux 16}) does not hold, then (\ref{aux 15}) cannot hold, thus concluding the proof of the
theorem.
\end{proof}

\end{document}